\newtheorem{proposition}{Proposition}
\newtheorem{example}{Example}
\newcommand{\svskip}{\vspace{1.75mm}}
\def\tr{\mathop{\rm tr}\nolimits}
\def\dist{\mathop{\rm dist}\nolimits}
\def\cond{\mathop{\rm cond}\nolimits}
\def\prox{\mathop{\rm prox}\nolimits}
\def\argmin{\mathop{\rm argmin}\nolimits}
\def\ri{\mathop{\rm ri}\nolimits}
\newcommand{\bb}{\boldsymbol{b}}
\newcommand{\bc}{\boldsymbol{c}}
\newcommand{\bp}{\boldsymbol{p}}
\newcommand{\bu}{\boldsymbol{u}}
\newcommand{\bv}{\boldsymbol{v}}
\newcommand{\bx}{\boldsymbol{x}}
\newcommand{\by}{\boldsymbol{y}}
\newcommand{\bz}{\boldsymbol{z}}
\newcommand{\bA}{\boldsymbol{A}}
\newcommand{\bD}{\boldsymbol{D}}
\newcommand{\bE}{\boldsymbol{E}}
\newcommand{\bI}{\boldsymbol{I}}
\newcommand{\bL}{\boldsymbol{L}}
\newcommand{\bM}{\boldsymbol{M}}
\newcommand{\bS}{\boldsymbol{S}}
\newcommand{\bU}{\boldsymbol{U}}
\newcommand{\bW}{\boldsymbol{W}}
\newcommand{\bX}{\boldsymbol{X}}
\newcommand{\bY}{\boldsymbol{Y}}
\newcommand{\bZ}{\boldsymbol{Z}}
\newcommand{\bbeta}{\boldsymbol{\beta}}
\newcommand{\bDelta}{\boldsymbol{\Delta}}
\newcommand{\bTheta}{\boldsymbol{\Theta}}
\title{The Proximal Distance Algorithm}
\author{Kenneth Lange and Kevin L.\ Keys} 
\begin{document}
\maketitle

\begin{abstract}
\noindent
The MM principle is a device for creating optimization algorithms satisfying the ascent or descent property. The current survey emphasizes the role of the MM principle in nonlinear programming.
For smooth functions, one can construct an adaptive interior point method based on scaled Bregman barriers. This algorithm does not follow the central path. For convex programming subject to nonsmooth constraints, one can combine an exact penalty method with distance majorization to create versatile algorithms that are effective even in discrete optimization. These proximal distance algorithms are highly modular and reduce to set projections and proximal mappings, both very well-understood techniques in optimization. We illustrate the possibilities in linear programming, binary piecewise-linear programming, nonnegative quadratic programming, $\ell_0$ regression,  matrix completion, and inverse sparse covariance estimation.

\begin{classification}
Primary 90C59; Secondary 65C60.
\end{classification}

\begin{keywords}
Majorization, convexity, exact penalty method, computational statistics.
\end{keywords}
\end{abstract}

\section{Introduction}

The MM principle is a device for constructing optimization algorithms \cite{borg07,hunter04,lange00,lange10,lange13}. In essence,
it replaces the objective function $f(\bx)$ by a simpler surrogate function $g(\bx \mid \bx_n)$
anchored at the current iterate $\bx_n$ and majorizing or minorizing $f(\bx)$. As a byproduct of optimizing $g(\bx \mid \bx_n)$ with 
respect to $\bx$, the objective function $f(\bx)$ is sent downhill or uphill, depending on whether the purpose is minimization or 
maximization. The next iterate $\bx_{n+1}$ is chosen to optimize the surrogate $g(\bx \mid \bx_n)$ subject to any relevant constraints.  
Majorization combines two conditions: the tangency condition  $g(\bx_n \mid \bx_n) =  f(\bx_n)$ and the domination condition
$g(\bx \mid \bx_n)  \geq f(\bx)$ for all $\bx$.  In minimization these conditions and the definition of $\bx_{n+1}$ lead to the descent property
\begin{align*}
f(\bx_{n+1}) & \leq  g(\bx_{n+1} \mid \bx_{n}) \leq g(\bx_{n} \mid \bx_{n}) = f(\bx_{n}).
\end{align*}
Minorization reverses the domination inequality and produces an ascent algorithm. Under 
appropriate regularity conditions, an MM algorithm is guaranteed to converge to a stationary point of the objective function \cite{lange13}. From the perspective of dynamical systems, the objective function serves as a Liapunov 
function for the algorithm map.

The MM principle simplifies optimization by: (a) separating the variables of a problem, (b) avoiding large matrix inversions, (c) linearizing a problem, (d) restoring symmetry, (e) dealing with
equality and inequality constraints gracefully, and (f) turning a nondifferentiable problem 
into a smooth problem. Choosing a tractable surrogate function $g(\bx \mid \bx_n)$ that hugs the objective function $f(\bx)$ as tightly as possible requires experience and skill with 
inequalities. The majorization relation between functions is closed under the formation of sums,
nonnegative products, limits, and composition with an increasing function. Hence, it is possible 
to work piecemeal in majorizing complicated objective functions.

It is impossible to do justice to the complex history of the MM principle in a paragraph.
The celebrated EM (expectation-maximization) principle of computational statistics is
a special case of the MM principle \cite{mclachlan08}. 
Specific MM and EM algorithms appeared years before the principle was well understood
\cite{hartley58,mckendrick26,smith57,weiszfeld37,yates34}. The widely applied
projected gradient and proximal gradient algorithms can be motivated from the
MM perspective, but the early emphasis on operators and fixed points obscured
this distinction. Although Dempster, Laird, and Rubin \cite{dempster77} formally named 
the EM algorithm, many of their contributions were anticipated by Baum \cite{baum72} 
and Sundberg \cite{sundberg76}.  The MM principle was clearly stated by 
Ortega and Rheinboldt \cite{ortega00}. de Leeuw \cite{deleeuw77} is generally credited with recognizing the 
importance of the principle in practice.  The EM algorithm had an immediate 
and large impact in computational statistics. The more general
MM principle was much slower to take hold. The papers \cite{deleeuw90,heiser95,kiers90}
by the Dutch school of psychometricians solidified its position. (In this early
literature the MM principle is called iterative majorization.) The related Dinklebach \cite{dinklebach67} maneuver 
in fractional linear programming also highlighted the importance
of the descent property in algorithm construction.

Before moving on, let us record some notational conventions.
All vectors and matrices appear in boldface. The $^*$ superscript indicates a vector
or matrix transpose. The Euclidean norm of a vector $\bx$ is denoted by $\|\bx\|$ and
the Frobenius norm of a matrix $\bM$ by $\|\bM\|_F$. For a smooth real-valued function 
$f(\bx)$, we write its gradient (column vector of partial derivatives) as $\nabla f(\bx)$, 
its first differential (row vector of derivatives) as $df(\bx)=\nabla f(\bx)^*$, and its second differential (Hessian matrix) as  $d^2f(\bx)$. 

\section{An Adaptive Barrier Method}

In convex programming it simplifies matters notationally to 
replace a convex inequality constraint $h_j(\bx) \le 0$ 
by the concave constraint $v_j(\bx) = -h_j(\bx) \ge 0$.
Barrier methods operate on the relative interior of the feasible region
where all $v_j(\bx)>0$. Adding an appropriate barrier term to the 
objective function $f(\bx)$ keeps an initially inactive constraint 
$v_j(\bx)$ inactive throughout an optimization search. If the barrier 
function is well designed, it should adapt and permit convergence to a 
feasible point $\by$ with one or more inequality constraints active. 

We now briefly summarize an adaptive barrier method that does not follow
the central path \cite{lange94}. 
Because the logarithm of a concave function is concave, the Bregman majorization \cite{bregman67} 
\begin{align*}
-\ln v_j(\bx) + \ln v_j(\bx_n) + \frac{1}{v_j(\bx_n)}dv_j(\bx_n)(\bx-\bx_n) & \ge  0
\end{align*}
acts as a convex barrier for a smooth constraint $v_j(\bx) \ge 0$.
To make the barrier adaptive, we scale it by the current value
$v_j(\bx_n)$ of the constraint. These considerations suggest an
MM algorithm based on the surrogate function
\begin{align*}
g(\bx \mid\bx_n) & =  f(\bx) - \rho \sum_{j=1}^s v_j(\bx_n) \ln v_j(\bx)
+ \rho \sum_{j=1}^s dv_j(\bx_n)(\bx-\bx_n) 
\end{align*}
for $s$ inequality constraints. Minimizing the surrogate subject to 
relevant linear equality constraints $\bA\bx=\bb$
produces the next iterate $\bx_{n+1}$. The constant $\rho$ determines the 
tradeoff between keeping the constraints inactive and minimizing $f(\bx)$.
One can show that the MM algorithm with exact minimization converges to 
the constrained minimum of $f(\bx)$ \cite{lange13}.

In practice one step of Newton's method is usually adequate to
decrease $f(\bx)$. The first step of Newton's method minimizes
the second-order Taylor expansion of $g(\bx \mid \bx_n)$ around $\bx_n$ subject
to the equality constraints. Given smooth functions, the two differentials 
\begin{align}
dg(\bx_n \mid \bx_n)  & = df(\bx_n) \nonumber \\
d^2g(\bx_n \mid \bx_n) &  = d^2f(\bx_n) - \rho \sum_{j=1}^s d^2v_j(\bx_n) 
\label{interior_pt_derivatives} \\
 & + \rho \sum_{j=1}^s \frac{1}{v_j(\bx_n)}\nabla v_j(\bx_n) dv_j(\bx_n) 
\nonumber 
\end{align}
are the core ingredients in the quadratic approximation of $g(\bx \mid \bx_n)$.
Unfortunately, one step of Newton's method is neither guaranteed to 
decrease $f(\bx)$ nor to respect the nonnegativity constraints.

\begin{example}
Adaptive Barrier Method for Linear Programming
\end{example}
For instance, the standard form of linear programming requires minimizing a 
linear function $f(\bx) = \bc^*\bx$ subject to $\bA \bx = \bb$ and $\bx \ge {\bf 0}$.
The quadratic approximation to the surrogate $g(\bx \mid \bx_n)$ amounts to
\begin{align*}
\bc^*\bx_n+\bc^*(\bx-\bx_n)+ \frac{\rho}{2} \sum_{j=1}^p 
\frac{1}{x_{nj}}(x_j-x_{nj})^2 .
\end{align*}
The minimum of this quadratic subject to the linear equality constraints occurs at the point 
\begin{align*}
\bx_{n+1} & = \bx_n -\bD_n^{-1} \bc+\bD_n^{-1}\bA^* 
(\bA \bD_n^{-1} \bA^*)^{-1}(\bb-\bA \bx_n+\bA \bD_n^{-1}\bc).
\end{align*}
Here $\bD_n$ is the diagonal matrix with $i$th diagonal entry 
$\rho x_{ni}^{-1}$, and the increment $\bx_{n+1}-\bx_n$ satisfies the
linear equality constraint $\bA(\bx_{n+1}-\bx_n) = \bb-\bA\bx_n$.
\qed \svskip

One can overcome the objections to Newton updates by taking a controlled 
step along the Newton direction $\bu_n=\bx_{n+1}-\bx_n$.  The key is to
exploit the theory of self-concordant functions \cite{boyd04,nesterov94}.  
A thrice differentiable convex function $h(t)$ is said to be self-concordant 
if it satisfies the inequality
\begin{align*}
\left| h'''(t) \right| & \le 2c h''(t)^{3/2} 
\end{align*}
for some constant $c \ge 0$ and all $t$ in the essential domain of $h(t)$. 
All convex quadratic functions qualify as self-concordant with $c=0$. The function 
$h(t) = -\ln (at+b)$ is self-concordant with constant 1.
The class of self-concordant functions is closed under sums and composition with linear functions. A convex function $k(\bx)$ 
with domain $\mathsf{R}^p$ is said to be self-concordant if every slice 
$h(t) = k(\bx+t\bu)$ is self-concordant.

Rather than conduct an expensive one-dimensional search along the Newton direc\-tion 
$\bx_n+t\bu_n$, one can majorize the surrogate function $h(t) = g(\bx_n+t\bu_n \mid \bx_n)$ 
along the half-line $t \ge 0$.  The clever majorization 
\begin{align}
h(t) & \le h(0)+h'(0)t-\frac{1}{c}h''(0)^{1/2}t - \frac{1}{c^2} \ln [1-cth''(0)^{1/2}]
\label{self_concordant_majorization}
\end{align}
serves the dual purpose of guaranteeing a decrease in $f(\bx)$ and
preventing a violation of the inequality constraints \cite{nesterov94}. Here $c$ is the 
self-concordance constant associated with the surrogate. The optimal choice of $t$ reduces to the damped Newton update
\begin{align}
t & = \frac{h'(0)}{h''(0)-ch'(0) h''(0)^{1/2}} . \label{damped_newton_step} 
\end{align}
The first two derivatives of $h(t)$ are clearly
\begin{align*}
h'(0) &  =  df(\bx_n)\bu_n \\
h''(0) & =  \bu_n^*d^2f(\bx_n)\bu_n - \rho \sum_{j=1}^s \bu_n^* d^2v_j(\bx_n)\bu_n \\
& + \rho \sum_{j=1}^s \frac{1}{v_j(\bx_n)}[dv_j(\bx_n)\bu_n]^2 .
\end{align*}
The first of these derivatives is nonpositive because $\bu_n$ is a descent direction for $f(\bx)$.
The second is generally positive because all of the contributing terms are nonnegative.

\begin{table}[tbh]
\vspace{.1in}
\begin{center}
\begin{tabular}{cccccc} \hline
& \multicolumn{2}{c}{No Safeguard} & \multicolumn{3}{c}{Self-concordant Safeguard} \\ 
\cmidrule(r){2-3} \cmidrule(r){4-6} 
Iteration $n$ & $\bc^*\bx_n$  &  $\|\bDelta_n\|$ & $\bc^*\bx_n$  & $\|\bDelta_n\|$  
&  $t_n$  \\ \hline
   1 &   -1.20000 &    0.25820 &    -1.11270 &    0.14550 &    0.56351 \\
   2 &   -1.33333 &    0.17213 &    -1.20437 &    0.11835 &    0.55578 \\
   3 &   -1.41176 &    0.10125 &    -1.27682 &    0.09353 &    0.55026 \\
   4 &   -1.45455 &    0.05523 &    -1.33288 &    0.07238 &    0.54630 \\
   5 &   -1.47692 &    0.02889 &    -1.37561 &    0.05517 &    0.54345 \\
  10 &   -1.49927 &    0.00094 &    -1.47289 &    0.01264 &    0.53746 \\
  15 &   -1.49998 &    0.00003 &    -1.49426 &    0.00271 &    0.53622 \\
  20 &   -1.50000 &    0.00000 &    -1.49879 &    0.00057 &    0.53597 \\
  25 &   -1.50000 &    0.00000 &    -1.49975 &    0.00012 &    0.53591 \\
  30 &   -1.50000 &    0.00000 &    -1.49995 &    0.00003 &    0.53590 \\
  35 &   -1.50000 &    0.00000 &    -1.49999 &    0.00001 &    0.53590 \\
  40 &   -1.50000 &    0.00000 &    -1.50000 &    0.00000 &    0.53590 \\
\hline
\end{tabular}
\end{center}
\caption{Performance of the adaptive barrier method in linear programming. \label{table0}}
\end{table}

When $f(\bx)$ is quadratic and the inequality constraints are affine, detailed
calculations show that the surrogate function $g(\bx \mid \bx_n)$ is self-concordant with constant 
\begin{align*}
c &  = \frac{1}{\sqrt{\rho \min\{v_1(\bx_n),\ldots,v_s(\bx_n)\}}}.
\end{align*}
Taking the damped Newton's step with step length (\ref{damped_newton_step})
keeps $\bx_n+t_n\bu_n$ in the relative interior 
of the feasible region while decreasing the surrogate and hence the objective function
$f(\bx)$. When $f(\bx)$ is not quadratic but can be majorized by a quadratic $q(\bx \mid \bx_n)$, 
one can replace $f(\bx)$ by $q(\bx \mid \bx_n)$ in calculating the adaptive-barrier update.
The next iterate $\bx_{n+1}$ retains the descent property. 

As a toy example consider the linear programming problem of minimizing $\bc^*\bx$
subject to $\bA \bx = \bb$ and $\bx \ge {\bf 0}$. Applying the adaptive barrier 
method to the choices
\begin{align*}
\bA & =  
\begin{pmatrix} 2 & 0 & 0 & 1 & 0 & 0 \\
0 & 2 & 0 & 0 & 1 & 0 \\
0 & 0 & 2 & 0 & 0 & 1
\end{pmatrix}, \quad \bb  = 
\begin{pmatrix} 1 \\ 1 \\ 1
\end{pmatrix}, \quad 
\bc  = 
\begin{pmatrix} -1 \\ -1 \\ -1 \\ 0 \\ 0 \\ 0
\end{pmatrix}  
\end{align*}
and to the feasible initial point $\bx_0 = \frac{1}{3}{\bf 1}$ produces the results  
displayed in Table \ref{table0}. Not shown is the minimum point
$(\frac{1}{2},\frac{1}{2},\frac{1}{2},0,0,0)^*$. Columns two and 
three of the table record the progress
of the unadorned adaptive barrier method. The quantity $\|\bDelta_n\|$ equals the
Euclidean norm of the difference vector $\bDelta_n= \bx_n-\bx_{n-1}$. Columns 
four and five repeat this information for the algorithm modified by the self-concordant 
majorization (\ref{self_concordant_majorization}). The quantity $t_n$ in column six represents 
the optimal step length (\ref{damped_newton_step}) in going from $\bx_{n-1}$ to $\bx_n$
along the Newton direction $\bu_{n-1}$. Clearly, there is a price to be paid in implementing 
a safeguarded Newton step. In practice, this price is well worth paying.

\section{Distance Majorization}

On a Euclidean space, the distance to a closed set $S$ is a Lipschitz 
function  $\dist(\bx,S)$ with Lipschitz constant 1.  If $S$ is also convex, then
$\dist(\bx,S)$ is a convex function.  Projection onto $S$ is intimately tied
to $\dist(\bx,S)$. Unless $S$ is convex, the projection operator
$P_S(\bx)$ is multi-valued for at least one argument $\bx$.  Fortunately, it 
is possible to majorize $\dist(\bx,S)$ at $\bx_n$ by $\|\bx-P_S(\bx_n)\|$.  This simple
observation is the key to the proximal distance algorithm to be discussed
later.  In the meantime, let us show how to derive two feasibility algorithms by
distance majorization \cite{chi13}.  Let $S_1,\ldots,S_m$ be closed sets. The method
of averaged projections attempts to find a point in their intersection 
$S = \cap_{j=1}^m S_j$. To derive the algorithm, consider the convex combination
\begin{align*}
f(\bx) & = \sum_{j=1}^m \alpha_j \dist(\bx,S_j)^2
\end{align*}
of squared distance functions. Obviously, $f(\bx)$ vanishes precisely
on $S$ when all $\alpha_j>0$. The majorization
\begin{align*}
g(\bx \mid \bx_n) & = \sum_{j=1}^m \alpha_j \|\bx-P_{S_j}(\bx_n)\|^2
\end{align*}
of $f(\bx)$ is easily minimized. The minimum point of $g(\bx \mid \bx_n)$,
\begin{align*}
\bx_{n+1} & = \sum_{j=1}^m \alpha_j P_{S_j}(\bx_n),
\end{align*}
defines the averaged operator. The MM principle guarantees that $\bx_{n+1}$ 
decreases the objective function.

Von Neumann's method of alternating projections can also be derived from this
perspective. For two sets $S_1$ and $S_2$, consider the problem of minimizing
the objective function $f(\bx) = \dist(\bx,S_2)^2$ subject to the constraint 
$\bx \in S_1$. The function
\begin{align*}
g(\bx \mid \bx_n) & = \|\bx-P_{S_2} (\bx_n)\|^2
\end{align*} 
majorizes $f(\bx)$. Indeed, the domination condition $g(\bx \mid \bx_n) \ge f(\bx)$
holds because $P_{S_2} (\bx_n)$ belongs to $S_2$; the tangency condition 
$g(\bx_n \mid \bx_n) = f(\bx_n)$ holds because $P_{S_2}(\bx_n)$ is the closest
point in $S_2$ to $\bx_n$. The surrogate function $g(\bx \mid \bx_n)$ 
is minimized subject to the constraint by taking 
$\bx_{n+1}= P_{S_1} \circ P_{S_2}(\bx_n)$. The MM principle again ensures that 
$\bx_{n+1}$ decreases the objective function. When the two sets intersect, the
least distance of 0 is achieved at any point in the intersection. One can
extend this derivation to three sets by minimizing the objective function
$f(\bx)=\dist(\bx,S_2)^2+\dist(\bx,S_3)^2$ subject to $\bx \in S_1$.
The surrogate
\begin{align*}
g(\bx \mid \bx_n) & = \|\bx-P_{S_2}(\bx_n)\|^2+\|\bx-P_{S_3}(\bx_n)\|^2 \\
& = 2 \Big\|\bx-\frac{1}{2}[P_{S_2}(\bx_n)+P_{S_3}(\bx_n)]\Big\|^2+c_n
\end{align*}
relies on an irrelevant constant $c_n$. The closest point in $S_1$ is 
\begin{align*}
\bx_{n+1} & = P_{S_1}\left\{\frac{1}{2}[P_{S_2}(\bx_n)+P_{S_3}(\bx_n)] \right\}.
\end{align*}
This construction clearly generalizes to more than three sets.

\section{The Proximal Distance Method}

We now turn to an exact penalty method that applies to nonsmooth functions. 
Clarke's exact penalty method \cite{clarke83} turns the constrained problem 
of minimizing a function $f(\by)$ over a closed set $S$ into the unconstrained 
problem of minimizing the function $f(\by)+\rho \dist(\by,S)$ for 
$\rho$ sufficiently large. Here is a precise statement of a generalization 
of Clarke's result
\cite{borwein00,clarke83,demyanov10}.
\begin{proposition} \label{proposition1}
Suppose $f(\by)$ achieves a local minimum on $S$ at the point 
$\bx$. Let $\phi_S(\by)$ denote a function that vanishes on $S$ and satisfies
$\phi_S(\by) \ge c \dist(\by,S)$ for all $\bx$ and some positive constant $c$.  
If $f(\by)$ is locally Lipschitz around $\bx$ with constant $L$, then for every 
$\rho \ge c^{-1}L$, $F_\rho(\by)=f(\by) + \rho \phi_S(\by)$ achieves a local 
unconstrained minimum at $\bx$.
\end{proposition}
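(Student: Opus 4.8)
The plan is to localize everything to a single ball on which $f$ is simultaneously Lipschitz and minimized over $S$ at $\bx$, and then to run a short chain of inequalities comparing $F_\rho(\by)$ with $F_\rho(\bx)$. First I would fix $\delta>0$ small enough that $f$ is Lipschitz with constant $L$ on the ball $B=B(\bx,\delta)$ and that $f(\bx)\le f(\bz)$ for every $\bz\in S\cap B$; both are available by hypothesis. Observe at the outset that $F_\rho(\bx)=f(\bx)$, since $\phi_S$ vanishes on $S$ and $\bx\in S$, so the goal is simply to show $F_\rho(\by)\ge f(\bx)$ for all $\by$ near $\bx$.

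Next I would take an arbitrary $\by$ in the smaller ball $B(\bx,\delta/3)$ and pass to a near-projection of $\by$ onto $S$. Because $\bx\in S$ we have $\dist(\by,S)\le\|\by-\bx\|<\delta/3$, so for any $\epsilon>0$ there is $\bz\in S$ with $\|\by-\bz\|\le\dist(\by,S)+\epsilon$; choosing $\epsilon<\delta/3$ forces $\|\bz-\bx\|\le\|\bz-\by\|+\|\by-\bx\|<\delta$, hence $\bz\in S\cap B$. This is precisely the point where the two local properties of Step 1 become usable. (In finite dimensions one may instead take $\bz\in P_S(\by)$ directly and set $\epsilon=0$.) Then the estimate is immediate, using $\phi_S(\by)\ge c\,\dist(\by,S)$, the Lipschitz bound $f(\by)\ge f(\bz)-L\|\by-\bz\|$, the local minimality $f(\bz)\ge f(\bx)$, and $\rho c\ge L$:
\begin{align*}
F_\rho(\by) &= f(\by)+\rho\,\phi_S(\by) \;\ge\; f(\by)+\rho c\,\dist(\by,S) \\
&\ge\; f(\bz)-L\|\by-\bz\|+\rho c\bigl(\|\by-\bz\|-\epsilon\bigr) \\
&=\; f(\bz)+(\rho c-L)\|\by-\bz\|-\rho c\,\epsilon \;\ge\; f(\bx)-\rho c\,\epsilon .
\end{align*}
Letting $\epsilon\downarrow 0$ gives $F_\rho(\by)\ge f(\bx)=F_\rho(\bx)$ for every $\by\in B(\bx,\delta/3)$, which is the desired local unconstrained minimality.

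The only real obstacle is the bookkeeping in Step 2: one must be sure the (near-)projection $\bz$ lands inside the ball $B$ where both the Lipschitz constant and the constrained-minimum property hold, which is why the neighborhood radius has to be shrunk by a constant factor before the comparison is legitimate; once this is arranged, the conclusion is a one-line string of inequalities. The hypothesis $\rho\ge c^{-1}L$ enters exactly to make the coefficient $\rho c-L$ nonnegative, so that the penalty term can never drag $F_\rho(\by)$ below the value $f(\bx)$.
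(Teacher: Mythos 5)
Your proof is correct. The paper supplies no proof of Proposition \ref{proposition1} at all --- it simply defers to the cited references of Clarke, Borwein--Lewis, and Demyanov --- and your argument is exactly the standard one found there: compare $F_\rho(\by)$ to $F_\rho(\bx)=f(\bx)$ via a near-projection $\bz$ of $\by$ onto $S$, using the Lipschitz bound, the constrained local minimality at $\bz$, and $\rho c\ge L$ to absorb the $\|\by-\bz\|$ terms. The one point requiring care, ensuring that $\bz$ lands in the ball where both local hypotheses hold (and handling the possible non-attainment of the distance via the $\epsilon$-near projection), is handled correctly by your shrinkage of the neighborhood radius.
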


Classically the choice $\phi_S(\bx)=\dist(\bx,S)$ was preferred.
For affine equality constraints $g_i(\bx)=0$ and affine inequality 
constraints $h_j(\bx) \le 0$, Hoffman's bound 
\begin{align*}
\dist(\by,S) & \le \tau \left\|\begin{matrix} G(\by) \\ H(\by)_+ \end{matrix} \right\|
\end{align*}
applies, where $\tau$ is some positive constant, $S$ is the feasible set where
$G(\by) = {\bf 0}$, and $H(\by)_+ \le {\bf 0}$ \cite{hoffman52}.  The vector $H(\by)_+$ 
has components $h_j(\bx)_+=\max\{h_j(\by),0\}$. When $S$ is 
the intersection of several closed sets $S_1,\ldots,S_m$, the alternative 
\begin{align}
\phi_S(\by) & = \sqrt{\sum_{i=1}^m \dist(\by,S_i)^2} \label{sum_of_distances}
\end{align}
is attractive. The next proposition gives sufficient conditions under
which the crucial bound $\phi_S(\by) \ge c \dist(\by,S)$  is valid for the function 
(\ref{sum_of_distances}).
\begin{proposition} \label{proposition0}
Suppose $S_1,\ldots,S_m$ are closed convex sets in $\mathsf{R}^p$
with the first $j$ sets polyhedral. Assume further that the intersection
\begin{align*}
S & = (\cap_{i=1}^j S_i) \cap (\cap_{i=j+1}^m \ri\:S_i)
\end{align*}
is nonempty and bounded. Then there exists a constant $\tau>0$ such that
\begin{align*} 
\dist(\bx,S) & \le \tau \sum_{i=1}^m \dist(\bx,S_i) 
 \le \tau \sqrt{m} \sqrt{ \sum_{i=1}^m \dist(\bx,S_i)^2}
\end{align*}
for all $\bx$. The sets $S_1,\ldots,S_m$ are said to be linearly regular.
\end{proposition}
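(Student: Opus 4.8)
The plan is to reduce the proposition to the single estimate $\dist(\bx,S)\le\tau'\max_{1\le i\le m}\dist(\bx,S_i)$: the first displayed inequality then follows from $\max_i\dist(\bx,S_i)\le\sum_i\dist(\bx,S_i)$, and the second is the Cauchy--Schwarz bound $\sum_{i=1}^m a_i\le\sqrt{m}\,\bigl(\sum_{i=1}^m a_i^2\bigr)^{1/2}$ applied to $a_i=\dist(\bx,S_i)$. Before starting I would simplify $S$ itself. Although $S$ is defined with relative interiors, the line segment principle shows that $\cap_{i=1}^m S_i=\cl S$: given $\by\in\cap_{i=1}^m S_i$ and any fixed $\bz\in S$, the half-open segment $[\bz,\by)$ lies in each polyhedral $S_i$ by convexity and in $\ri S_i$ for each of the remaining sets, hence in $S$, so $\by\in\cl S$; the reverse inclusion is immediate since every $S_i$ is closed. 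Thus $\cap_{i=1}^m S_i$ is a nonempty, closed, bounded, convex set and $\dist(\bx,S)=\dist(\bx,\cap_{i=1}^m S_i)$, so I may and do replace $S$ by $\cap_{i=1}^m S_i$ from here on.

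First I would dispatch the purely polyhedral case $j=m$. Writing $S_i=\{\bx:\bA_i\bx\le\bb_i\}$ and stacking all the rows into a single system $\bA\bx\le\bb$ that describes $S$, Hoffman's bound~\cite{hoffman52} furnishes a \emph{global} constant $\tau_0$ with $\dist(\bx,S)\le\tau_0\|(\bA\bx-\bb)_+\|\le\tau_0\sum_i\|(\bA_i\bx-\bb_i)_+\|$, and a second application of Hoffman's bound to the individual systems gives $\|(\bA_i\bx-\bb_i)_+\|\le\kappa_i\dist(\bx,S_i)$. Chaining these two facts proves the claim in this case for every $\bx$, with no boundedness hypothesis needed.

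For the general case I would argue in two stages. \emph{Bounded linear regularity}: for each $R>0$ there is a constant $\tau_R$ with $\dist(\bx,S)\le\tau_R\max_i\dist(\bx,S_i)$ on the ball $\|\bx\|\le R$. This is the crux; it is the Bauschke--Borwein theorem that a finite collection of closed convex sets, the first $j$ of them polyhedral, satisfying exactly the relative-interior condition imposed here, is boundedly linearly regular. Its proof runs by induction on the number $m-j$ of non-polyhedral sets, the polyhedral base case being the one just treated, while the inductive step uses the relative-interior hypothesis --- through the line segment principle and Rockafellar's relative-interior calculus --- to keep the curved sets under control. \emph{Growth at infinity}: since $S$ is bounded, $\dist(\bx,S)\le\|\bx\|+r$ with $r=\dist(\mathbf{0},S)$. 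If along some sequence with $\|\bx_k\|\to\infty$ one had $\max_i\dist(\bx_k,S_i)=o(\|\bx_k\|)$, then, passing to a subsequence with $\bx_k/\|\bx_k\|\to\bu$ and $\|\bu\|=1$ and noting that $P_{S_i}(\bx_k)/\|\bx_k\|\to\bu$, the direction $\bu$ would lie in every recession cone $0^{+}S_i$, hence in $\cap_i 0^{+}S_i=0^{+}S=\{\mathbf{0}\}$ because $S$ is bounded, a contradiction. So $\max_i\dist(\bx,S_i)\ge c\|\bx\|$ outside some ball $B(\mathbf{0},R_0)$; combined with the linear upper bound this yields the desired inequality there, bounded linear regularity yields it inside $B(\mathbf{0},R_0)$, and taking $\tau'$ to be the larger of the two constants finishes the argument.

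The hard part is the bounded-linear-regularity step: polyhedral and curved constraints genuinely call for different treatment, since polyhedral constraints carry the global Hoffman bound whereas for a curved set it is a common \emph{relative-interior} point --- not merely a common boundary point --- that prevents the distance functions from degenerating against $\dist(\,\cdot\,,S)$. The tangential configuration $S_1=\{(x,y):y\ge x^2\}$ and $S_2=\{(x,y):y\le 0\}$ illustrates both the phenomenon and the need for the hypothesis: along $\bx=(t,0)$ one has $\dist(\bx,S_1)$ of order $t^2$ but $\dist(\bx,S)$ of order $t$, and correspondingly $S_2\cap\ri S_1=\emptyset$, so the relative-interior condition rightly fails. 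Everything outside that step is Cauchy--Schwarz, Hoffman's bound, the line segment principle, and routine recession-cone bookkeeping.
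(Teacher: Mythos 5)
Your argument is correct in outline and is, in substance, the same argument the paper relies on --- but the paper's ``proof'' is nothing more than a pointer to \cite{bauschke99,deutsch01} plus two definitions, so you have actually reconstructed the scaffolding that the paper leaves entirely to the references. Your reductions are sound: the identity $\cap_{i=1}^m S_i=\cl S$ via the line segment principle, the purely polyhedral case via Hoffman's bound, and the final Cauchy--Schwarz step are all fine. Two remarks. First, the inequality $\|(\bA_i\bx-\bb_i)_+\|\le\kappa_i\dist(\bx,S_i)$ is not ``a second application of Hoffman's bound'' --- Hoffman's bound goes in the opposite (and hard) direction; what you want here is the elementary estimate obtained by writing $\by=P_{S_i}(\bx)$, noting $(\bA_i\bx-\bb_i)_+\le(\bA_i(\bx-\by))_+$ componentwise since $\bA_i\by\le\bb_i$, and taking $\kappa_i=\|\bA_i\|$. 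The inequality is true, so this is a misattribution rather than a gap. Second, your recession-cone step is a genuine addition worth keeping: the Bauschke--Borwein--Li theorem under the stated polyhedral-plus-relative-interior hypothesis delivers only \emph{bounded} linear regularity (a constant $\tau_R$ valid on each ball), whereas the proposition as stated claims a single $\tau$ valid for all $\bx$; your observation that boundedness of $S$ forces $\max_i\dist(\bx,S_i)\ge c\|\bx\|$ outside a large ball, via $0^+(\cap_iS_i)=\cap_i0^+S_i=\{\boldsymbol{0}\}$, is exactly what upgrades the bounded statement to the global one, and it is a step the paper silently absorbs into its citation. The one piece you do not prove --- bounded linear regularity itself --- is deferred to the same reference the paper cites, so no ground is lost there.
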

\begin{proof} See the references \cite{bauschke99,deutsch01} for all details. 
A polyhedral set is the nonempty intersection of a finite number of half-spaces. 
The operator $\ri\, K$ forms the relative interior
of the convex set $K$, namely, the interior of $K$ relative to the affine hull 
of $K$. When $K$ is nonempty, its relative interior is nonempty and generates the
same affine hull as $K$ itself. \end{proof} 

In general, we will require $f(\bx)$ and $\phi_S(\bx)$ to be continuous functions
and the sum $F_\rho(\by)=f(\by) + \rho \phi_S(\by)$ to be coercive for some value $\rho=\rho_0$.  
It then follows that $F_\rho(\by)$ is coercive and attains its minimum for all $\rho \ge \rho_0$.
One can prove a partial converse to Clarke's theorem \cite{demyanov10,DemDiFac1998}.
This requires the enlarged set $S_\epsilon = \{\bx: \phi_S(\bx) < \epsilon\}$ 
of points lying close to $S$ as measured by $\phi_S(\bx)$. 
\begin{proposition} \label{proposition2}
Suppose that $f(\by)$ is Lipschitz on $S_\epsilon$ for some $\epsilon > 0$. 
Then under the stated assumptions on $f(\bx)$ and $\phi_S(\bx)$, a 
global minimizer of $F_\rho(\by)$ is a constrained minimizer of $f(\by)$
for all sufficiently large $\rho$. 
\end{proposition}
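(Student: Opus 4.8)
The plan is to argue by contradiction, extracting a limit point from a sequence of global minimizers taken along a diverging penalty parameter. Suppose the claim fails. Then there is a sequence $\rho_k \to \infty$ and global minimizers $\by_k$ of $F_{\rho_k}$ that are \emph{not} constrained minimizers of $f$. First I would establish that the sequence $\{\by_k\}$ is bounded. Since $F_{\rho_0}$ is coercive and $F_{\rho_k}(\by_k) \le F_{\rho_k}(\bx^\star) = f(\bx^\star)$ for any fixed constrained minimizer $\bx^\star$ (using $\phi_S(\bx^\star)=0$), and since $\rho_k \ge \rho_0$ forces $F_{\rho_0}(\by_k) \le F_{\rho_k}(\by_k) \le f(\bx^\star)$, the iterates $\by_k$ lie in a fixed sublevel set of the coercive function $F_{\rho_0}$; hence they are bounded and (passing to a subsequence) converge to some $\by_\infty$.

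Next I would show $\by_\infty \in S$. From $\rho_k \phi_S(\by_k) \le f(\bx^\star) - f(\by_k)$ and the boundedness of $\{\by_k\}$, the right-hand side is bounded above, so $\phi_S(\by_k) \le C/\rho_k \to 0$. Continuity of $\phi_S$ gives $\phi_S(\by_\infty) = 0$; since $\phi_S$ vanishes exactly on $S$ (it vanishes on $S$ and dominates $c\,\dist(\cdot,S)$, so $\phi_S(\by)=0$ forces $\dist(\by,S)=0$, i.e.\ $\by\in S$ as $S$ is closed), we get $\by_\infty \in S$. Moreover, eventually $\phi_S(\by_k) < \epsilon$, so $\by_k \in S_\epsilon$ for large $k$, which is what licenses use of the Lipschitz hypothesis on $f$.

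Now I would identify $\by_\infty$ as a constrained minimizer of $f$, which yields the contradiction. Fix any constrained minimizer $\bx^\star$ of $f$ (equivalently, $\inf_S f$). For large $k$, using that $\by_k, \bx^\star \in S_\epsilon$ (the latter because $\bx^\star\in S$) and that $f$ is Lipschitz on $S_\epsilon$ with some constant $L$, one has, from the definition of $S_\epsilon$ and $\phi_S \ge c\,\dist(\cdot,S)$,
\begin{align*}
f(\by_k) + \rho_k c\, \dist(\by_k, S) \;\le\; f(\by_k) + \rho_k \phi_S(\by_k) \;=\; F_{\rho_k}(\by_k) \;\le\; F_{\rho_k}(\bx^\star) \;=\; f(\bx^\star).
\end{align*}
Let $\bz_k = P_S(\by_k)$ be a (measurable selection of a) nearest point in $S$, so $\|\by_k - \bz_k\| = \dist(\by_k,S)$; the segment from $\by_k$ to $\bz_k$ stays in $S_\epsilon$ for large $k$, so $f(\bz_k) \le f(\by_k) + L\,\dist(\by_k,S)$, whence
\begin{align*}
f(\bx^\star) \;\le\; f(\bz_k) \;\le\; f(\by_k) + L\, \dist(\by_k,S) \;\le\; f(\bx^\star) - (\rho_k c - L)\,\dist(\by_k,S).
\end{align*}
For $k$ large enough that $\rho_k c > L$, this forces $\dist(\by_k, S) = 0$, i.e.\ $\by_k \in S$, and then $f(\by_k) \le f(\bx^\star) = \inf_S f$, so $\by_k$ \emph{is} a constrained minimizer of $f$ — contradicting the choice of $\by_k$. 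This proves the proposition.

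The main obstacle, and the step deserving the most care, is the second display: connecting $f(\by_k)$ to a value attained on $S$ requires the Lipschitz bound along a path that remains inside $S_\epsilon$, so one must verify that the relevant segments (or at least the pair $\by_k$, $P_S(\by_k)$) lie in $S_\epsilon$ — this is exactly why the hypothesis is ``$f$ Lipschitz on $S_\epsilon$'' rather than merely ``locally Lipschitz near $S$,'' and why the enlarged set $S_\epsilon$ is introduced in the first place. A secondary subtlety is the measurability/existence of the projection $P_S(\by_k)$ when $S$ is merely closed (not convex), but since we only need, for each fixed $k$, \emph{some} nearest point, this is immediate from closedness; no uniformity in $k$ is required.
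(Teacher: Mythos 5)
Your argument is correct. Note, however, that the paper supplies no proof of Proposition~\ref{proposition2} at all --- it simply defers to the cited references of Demyanov et al.\ --- so there is no in-paper argument to compare against; your proof fills that gap with what is essentially the standard exact-penalization argument: trap the minimizers $\by_k$ in a sublevel set of the coercive $F_{\rho_0}$ via $F_{\rho_0}(\by_k)\le F_{\rho_k}(\by_k)\le f(\bx^\star)$, deduce $\phi_S(\by_k)\le C/\rho_k\to 0$ so that eventually $\by_k\in S_\epsilon$, and then play the Lipschitz estimate $f(P_S(\by_k))\le f(\by_k)+L\dist(\by_k,S)$ against the penalty inequality $\rho_k c\,\dist(\by_k,S)\le f(\bx^\star)-f(\by_k)$ to force $\dist(\by_k,S)=0$ once $\rho_k c>L$. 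Two minor points. First, the subsequential limit $\by_\infty$ is never used: the contradiction is extracted from $\by_k$ itself, so the compactness step can be dropped, and the argument then yields an explicit (non-asymptotic) threshold on $\rho$, namely any $\rho\ge\rho_0$ with $\rho>L/c$ and $\rho>\bigl(f(\bx^\star)-\inf_K f\bigr)/\epsilon$, where $K$ is the relevant sublevel set of $F_{\rho_0}$. Second, your claim that the whole segment from $\by_k$ to $P_S(\by_k)$ lies in $S_\epsilon$ is not justified --- membership in $S_\epsilon$ is governed by $\phi_S$, which is bounded \emph{below}, not above, by a multiple of $\dist(\cdot,S)$, so interior points of the segment could in principle have large $\phi_S$ --- but, as you yourself observe, it is also unnecessary, since Lipschitz continuity of $f$ on $S_\epsilon$ is a two-point condition and both endpoints $\by_k$ and $P_S(\by_k)\in S\subseteq S_\epsilon$ do lie in $S_\epsilon$.
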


When the constraint set $S$ is compact and $f(\by)$ has a continuously 
varying local Lipschitz constant, the hypotheses of Proposition \ref{proposition2} 
are fulfilled. This is the case, for instance, when $f(\by)$ is continuously 
differentiable. With this background on the exact penalty method in mind, we now
sketch an approximate MM algorithm for convex programming that is motivated by 
distance majorization. This algorithm is designed to exploit set projections
and proximal maps. The proximal map $\prox_h(\by)$ associated with a convex function $h(\bx)$
satisfies
\begin{align*}
\prox_h(\by) & =  \argmin_{\bx} \Big[h(\bx)+\frac{1}{2}\|\by-\bx\|^2 \Big].
\end{align*}
A huge literature and software base exist for computing projections and proximal
maps \cite{bauschke11}.

Since the function $\dist(\bx,S)$ is merely continuous, we advocate 
approximating it by the differentiable function 
\begin{align*}
\dist_{\epsilon}(\bx,S) & = \sqrt{\dist(\bx,S)^2+\epsilon}
\end{align*}
for $\epsilon>0$ small. The composite function $\dist_{\epsilon}(\bx,S)$ is 
convex when $S$ is convex because the function $\sqrt{t^2+\epsilon}$ is increasing and convex
on $[0,\infty)$. Instead of minimizing $f(\bx)+\rho \dist(\bx,S)$, we 
suggest minimizing the differentiable convex function 
$f(\bx)+\rho \dist_\epsilon(\bx,S)$ by an MM algorithm. Regardless of
whether $S$ is convex, the majorization
\begin{align}
\dist_{\epsilon}(\bx,S) & \le \sqrt{\|\bx-P_S(\bx_n)\|^2+\epsilon}
\label{distance_to_projection_majorization}
\end{align}
holds. If $S$ is nonconvex, there may be a multiplicity of closest points,
and one must choose a representative of the set $P_S(\bx_n)$.
In any event one can invoke the univariate majorization
\begin{align}
\sqrt{t} & \ge \sqrt{t_n}+\frac{t-t_n}{2\sqrt{t_n}}  \label{square_root_majorization}
\end{align}
of the concave function $\sqrt{t}$ on the interval $t>0$ and majorize the majorization (\ref{distance_to_projection_majorization}) by
\begin{align*}
\sqrt{\|\bx-P_S(\bx_n)\|^2+\epsilon} & \le 
\frac{1}{2 \sqrt{\|\bx_n-P_S(\bx_n)\|^2+\epsilon}}\|\bx-P_S(\bx_n)\|^2 +c_n
\end{align*}
for some irrelevant constant $c_n$. The second step of our proposed
MM algorithm consists of minimizing the surrogate function
\begin{align*}
g(\bx \mid \bx_n) & = f(\bx)+ \frac{w_n}{2}\|\bx-P_S(\bx_n)\|^2 \\
w_n & = \frac{\rho}{\sqrt{\|\bx_n-P_S(\bx_n)\|^2+\epsilon}} .
\end{align*}
The corresponding proximal map drives $f(\bx)+\rho \dist_\epsilon(\bx,S)$ downhill.
Under the more general exact penalty (\ref{sum_of_distances}),
the surrogate function depends on a sum of spherical quadratics
rather than a single spherical quadratic.

It is possible to project onto a variety of closed nonconvex sets.
For example, if $S$ is the set of integers, then projection amounts to
rounding. An ambiguous point $n+\frac{1}{2}$ can be projected
to either $n$ or $n+1$. Projection onto a finite set simply
tests each point separately. Projection onto a Cartesian
product is achieved via the Cartesian product of the
projections. One can also project onto many 
continuous sets of interest. For example, to project onto
the closed set of points having at most $k$ nonzero
coordinates, one zeros out all but the $k$ largest coordinates 
in absolute value. Projection onto the sphere of
center $\bz$ and radius $r$ takes $\by \ne \bz$
into the point $\bz+\frac{r}{\|\by-\bz\|}(\by-\bz)$. All
points of the sphere are equidistant from its center. 

By definition the update $\bx_{n+1}=\prox_{w_n^{-1}f}[P_S(\bx_n)]$  
minimizes $g(\bx \mid \bx_n)$. We will refer to this MM algorithm as the 
{\bf proximal distance algorithm}. It enjoys several virtues. First, it allows 
one to exploit the extensive body of results on proximal maps and projections. 
Second, it does not demand that the constraint set $S$ be convex. Third, it does 
not require the objective function $f(\bx)$ to be convex or smooth.
Finally, the minimum values and minimum points of the functions 
$f(\bx)+\rho \dist(\bx,S)$ and
$f(\bx)+\rho \dist_\epsilon(\bx,S)$ are close when $\epsilon>0$ is small.

In implementing the proximal distance algorithm, the constants $L$ and $\epsilon$ must
specified. For many norms the Lipschitz constant $L$ is known. For a differentiable
function $f(\bx)$, the mean value inequality suggests taking $L$ equal to 
the maximal value of $\|\nabla f(\bx)\|$ in a neighborhood of the optimal point. 
In specific problems a priori bounds can be derived.  If no such
prior bound is known, then one has to guess an appropriate $\rho$ and 
see if it leads to a constrained minimum. If not, $\rho$ should be systematically 
increased until a constrained minimum is reached. Even with a
justifiable bound, it is prudent to start $\rho$ well below its
intended upper bound to emphasize minimization of the loss
function in early iterations.  Experience shows that gradually decreasing 
$\epsilon$ is also  a good tactic; otherwise, one again runs the risk of putting too
much early stress on satisfying the constraints. In practice the sequences 
$\rho_n = \min\{\alpha^n \rho_0,\rho_{\max}\}$ and  
$\epsilon_n = \max\{\beta^{-n}\epsilon_0,\epsilon_{\min}\}$ 
work well for $\alpha$ and $\beta$ slightly larger than 1, say 1.2,
and $\rho_0 = \epsilon_0 = 1$.
On many problems more aggressive choices of $\alpha$ and $\beta$ 
are possible. The values of $\rho_{\max}$ and $\epsilon_{\min}$ are
problem specific, but taking $\rho_{\max}$ substantially greater than
a known Lipschitz constant slows convergence. Taking $\epsilon_{\min}$
too large leads to a poor approximate solution. 

\section{Sample Problems}

We now explore some typical applications of the proximal distance algorithm. 
In all cases we are able to establish local Lipschitz constants.
Comparisons with standard optimization software serve as performance
benchmarks.

\begin{example}
Projection onto an Intersection of Closed Convex Sets
\end{example}
Let $S_1,\ldots,S_k$ be closed convex sets, and assume that projection onto
each $S_j$ is straightforward. Dykstra's algorithm \cite{deutsch01,dykstra83}
is designed to find the projection of an external point $\by$ onto 
$S = \cap_{j=1}^k S_j$. The proximal distance algorithm provides an 
alternative based on the convex function
\begin{align*}
f(\bx) & =  \sqrt{\|\bx-\by\|^2+\delta}
\end{align*}
for $\delta$ positive, say $\delta =1$. The choice $f(\bx)$ is preferable to the
obvious choice $\|\bx-\by\|^2$ because $f(\bx)$  is Lipschitz with Lipschitz
constant 1. In the proximal distance algorithm, we take
\begin{align*}
\phi_S(\bx) & =  \sqrt{\sum_{j=1}^k \dist(\bx,S_j)^2}
\end{align*}
and minimize the surrogate function
\begin{align*}
g(\bx \mid \bx_n) & = f(\bx)+\frac{w_n}{2} \sum_{j=1}^k\|\bx-\bp_{nj}\|^2 
 = f(\bx)+\frac{kw_n}{2}\|\bx-\bar{\bp}_{n}\|^2+c_n ,
\end{align*}
where $\bp_{nj}$ is the projection of $\bx_n$ onto $S_j$, $\bar{\bp}_n$
is the average of the projections $\bp_{nj}$, $c_n$ is an irrelevant constant, 
and
\begin{align*}
w_n & = \frac{\rho}{\sqrt{\sum_{j=1}^k\|\bx_n-\bp_{nj}\|^2+\epsilon}} .
\end{align*}
After rearrangement, the stationarity condition for optimality reads 
\begin{align*}
\bx & = (1-\alpha)\by+\alpha \bar{\bp}_n , \quad 
\alpha  = \frac{kw_n}{\frac{1}{\sqrt{\|\bx-\by\|^2+\delta}}+kw_n}.
\end{align*}
In other words, $\bx_{n+1}$ is a convex combination of $\by$ and $\bar{\bp}_n$.

\begin{table}[tbh]
\begin{center}
\begin{tabular}{ccccc}
\toprule
& \multicolumn{2}{c}{Dykstra} & \multicolumn{2}{c}{Proximal Distance} \\ 
\cmidrule(r){2-3} \cmidrule(r){4-5} 
Iteration $n$ & $x_{n1}$ & $x_{n2}$ & $x_{n1}$ & $x_{n2}$ \\ \hline 
  0 & -1.00000 &  2.00000 &     -1.00000 &      2.00000 \\
  1 & -0.44721 &  0.89443 &     -0.44024 &      1.60145 \\
  2 &  0.00000 &  0.89443 &     -0.25794 &      1.38652 \\
  3 & -0.26640 &  0.96386 &     -0.16711 &      1.25271 \\
  4 &  0.00000 &  0.96386 &     -0.11345 &      1.16647 \\
  5 & -0.14175 &  0.98990 &     -0.07891 &      1.11036 \\
 10 &  0.00000 &  0.99934 &     -0.01410 &      1.01576 \\
 15 & -0.00454 &  0.99999 &     -0.00250 &      1.00257 \\
 20 &  0.00000 &  1.00000 &     -0.00044 &      1.00044 \\
 25 & -0.00014 &  1.00000 &     -0.00008 &      1.00008 \\
 30 &  0.00000 &  1.00000 &     -0.00001 &      1.00001 \\
 35 &  0.00000 &  1.00000 &      0.00000 &      1.00000 \\
\bottomrule 
\end{tabular} 
\end{center} 
\caption{Dykstra's algorithm versus the proximal distance algorithm.}
\label{tab:dykstra_versus_mm}
\end{table}

To calculate the optimal coefficient $\alpha$, we minimize the convex surrogate
\begin{align*}
h(\alpha) & = g[(1-\alpha)\by+\alpha \bar{\bp}_n \mid \bx_n] 
= \sqrt{\alpha^2 d^2+\delta}+\frac{kw_n}{2}(1-\alpha)^2d^2+c_n
\end{align*} 
for $d=\|\by-\bar{\bp}_n\|$. Its derivative
\begin{align*}
h'(\alpha) & = \frac{\alpha d^2}{\sqrt{\alpha^2 d^2+\delta}}-kw_n(1-\alpha)d^2
\end{align*}
satisfies $h'(0)<0$ and $h'(1)>0$ and possesses a unique root on the
open interval $(0,1)$. This root can be easily computed by bisection or Newton's method.

Table \ref{tab:dykstra_versus_mm} compares Dykstra's algorithm and the
proximal distance algorithm on a simple planar example. Here $S_1$ is the 
closed unit ball in $\mathsf{R}^2$, and $S_2$ is the closed halfspace with 
$x_1 \ge 0$.  The intersection $S$ reduces to the right half ball centered at the 
origin.  The table records the iterates of the two algorithms from the 
starting point $\bx_0=(-1,2)^*$ until their eventual convergence to 
the geometrically obvious solution $(0,1)^*$. In the proximal distance
method we set $\rho_n = 2$ and aggressively $\epsilon_n = 4^{-n}$. The two algorithms 
exhibit similar performance but take rather different trajectories.
\qed

\begin{example}
Binary Piecewise-Linear Functions
\end{example}
The problem of minimizing the binary piecewise-linear function
\begin{align*}
f(\bx) & = \sum_{i<j} w_{ij}|x_i-x_j|+\bb^*\bx
\end{align*}
subject to $\bx \in \{0,1\}^d$ and nonnegative
weights $w_{ij}$ is a typical discrete optimization
problem with applications in graph cuts. If we invoke the majorization
\begin{align*}
|x_i-x_j| & \le \Big|x_i -\frac{x_{ni}+x_{nj}}{2}\Big|
+\Big|x_j -\frac{x_{ni}+x_{nj}}{2}\Big|
\end{align*}
prior to applying the proximal operator, then the proximal distance
algorithm separates the parameters. Parameter separation promotes
parallelization and benefits from a fast algorithm for computing proximal
maps in one dimension. The one-dimensional algorithm is similar to but 
faster than bisection \cite{parikh13}. Finally, the objective function 
is Lipschitz with the explicit constant 
\begin{align}
L & = \sum_i \sqrt{\sum_{j \ne i}w_{ij}^2}+\|\bb\|. \label{BP_constant}
\end{align}
This assertion follows from the simple bound
\begin{align*}
|f(\bx) -f(\by)| & \le \sum_i \sum_{j \ne i} w_{ij} |x_j-y_j| +|\bb^*(\bx-\by)|\\
& \le \sum_i \sqrt{\sum_{j\ne i} w_{ij}^2} \cdot \|\bx-\by\|+\|\bb\| \cdot\|\bx-\by\|
\end{align*}
under the symmetry convention $w_{ij}=w_{ji}$. 

\begin{table}[!ht]
\begin{center}
\begin{tabular}{crrc}
\toprule
& \multicolumn{2}{c}{CPU times } \\ 
\cmidrule(r){2-3} 
Dimension\!\!\! & MM & CVX & Iterations \\ \hline 
\;\;\;\;\;\;2    & 0.038 & 0.080 &  \;\;\;\;9   \\ 
\;\;\;\;\;\;4    & 0.052 & 0.060 &  \;\;18  \\ 
\;\;\;\;\;\;8    & 2.007 & 0.050 &  200 \\ 
\;\;\;\;16   & 2.416 & 0.100 &  200 \\ 
\;\;\;\;32   & 2.251 & 0.130 &  200 \\ 
\;\;\;\;64   & 4.134 & 0.400 &  200 \\ 
\;\;128  & 0.212 & 2.980 &  \;\;32  \\ 
\;\;256  & 0.868 & 62.63 &  200 \\ 
\;\;512  & 68.27 & 1534  &  200 \\ 
1024 & 526.6 &   *   &  200 \\ 
2048 & 127.2 &   *   &  200 \\ 
4096 & 547.4 &   *   &  200 \\ 
\bottomrule 
\end{tabular} 
\end{center} 
\caption{CPU times in seconds and MM iterations until convergence for 
binary piecewise linear functions. Asterisks denote  
computer runs exceeding computer memory limits. Iterations were capped
at $200$.} 
\label{tab:mm_vs_cvx_PWLoBox}
\end{table}

Table \ref{tab:mm_vs_cvx_PWLoBox} displays the numerical results for a few typical examples.
For each dimension $d$ we filled $\bb$ with standard normal deviates
and the upper triangle of the weight matrix $\bW$ with the absolute values
of such deviates.  The lower triangle of $\bW$ was determined by symmetry.
Small values of $\bb$ often lead to degenerate solutions 
$\bx$ with all entries $0$ or $1$. To avoid this
possibility, we multiplied each entry of $\bb$ by $d$.
In the graph cut context, a degenerate solution corresponds to 
no cuts at all or a completely cut graph.  These examples depend on the schedules 
$\rho_n = \min\{1.2^n,L\}$ and $\epsilon_n = \max\{1.2^{-n},10^{-15}\}$ 
for the two tuning constants and the local Lipschitz constant (\ref{BP_constant}).

Although the MM proximal 
distance algorithm makes good progress towards the minimum in the first $100$ iterations,
it sometimes hovers around its limit without fully converging.
This translates into fickle compute times, and for this reason we
capped the number of MM iterations at $200$. For small dimensions 
MM can be much slower than CVX. Fortunately, the performance of the MM
algorithm improves markedly as $d$ increases. In all runs
the two algorithms reach the same solution after rounding
components to the nearest integer. MM also requires much less storage than CVX. 
Asterisks appear in the table where CVX demanded more 
memory than our laptop computer could deliver. \qed 

\begin{example}
Nonnegative Quadratic Programming \label{nnqp_example}
\end{example}
The proximal distance algorithm is applicable in minimizing a convex 
quad\-ratic $f(\bx) = \frac{1}{2}\bx^* \bA \bx+\bb^*\bx$
subject to the constraint $\bx \ge {\bf 0}$.  In this
nonnegative quadratic programming program,
let $\by_n$ be the projection of the current iterate $\bx_n$ 
onto $S=\mathsf{R}_+^d$. If we define the weight 
\begin{align*}
w_n & =  \frac{\rho}{\sqrt{\|\bx_n-\by_n\|^2+\epsilon}},
\end{align*}
then the next iterate can be expressed as
\begin{align*}
\bx_{n+1} & = (\bA+w_n \bI)^{-1}(w_n \by_n-\bb).
\end{align*}
The multiple matrix inversions implied by the update can be avoided
by extracting and caching the spectral decomposition $\bU^* \bD \bU$ of $\bA$
at the start of the algorithm.
The inverse $(\bA+w_n \bI)^{-1}$ then reduces to $\bU^* (\bD+w_n\bI)^{-1} \bU$.
The diagonal matrix $\bD+w_n\bI$ is obviously trivial to invert. The
remaining operations in computing $\bx_{n+1}$ collapse to matrix times 
vector multiplications. Nonnegative least squares is a special case of 
nonnegative quadratic programming.

\begin{table}[thb]
\begin{center}
\begin{tabular}{rrrrrrrrr}
\toprule
& \multicolumn{4}{c}{CPU times} & \multicolumn{4}{c}{Optima} \\ 
\cmidrule(r){2-5} \cmidrule(r){6-9} 
$d$  & MM   & CV   & MA   & YA   & MM     & CV     & MA     & YA \\ \hline 
8 & 0.97 & 0.23 & 0.01 & 0.13 & -0.0172 & -0.0172 & -0.0172 & -0.0172\\ 
16 & 0.50 & 0.24 & 0.01 & 0.11 & -1.1295 & -1.1295 & -1.1295 & -1.1295\\ 
32 & 0.50 & 0.24 & 0.01 & 0.14 & -1.3811 & -1.3811 & -1.3811 & -1.3811\\ 
64 & 0.57 & 0.28 & 0.01 & 0.13 & -0.5641 & -0.5641 & -0.5641 & -0.5641\\ 
128 & 0.79 & 0.36 & 0.02 & 0.14 & -0.7018 & -0.7018 & -0.7018 & -0.7018\\ 
256 & 1.66 & 0.65 & 0.06 & 0.22 & -0.6890 & -0.6890 & -0.6890 & -0.6890\\ 
512 & 5.61 & 2.95 & 0.26 & 0.73 & -0.5971 & -0.5968 & -0.5970 & -0.5970\\ 
1024 & 32.69 & 21.90 & 1.32 & 2.91 & -0.4944 & -0.4940 & -0.4944 & -0.4944\\ 
2048 & 156.7 & 178.8 & 8.96 & 15.89 & -0.4514 & -0.4505 & -0.4512 & -0.4512\\ 
4096 & 695.1 & 1551 & 57.73 & 91.54 & -0.4690 & -0.4678 & -0.4686 & -0.4686\\  
\bottomrule 
\end{tabular} 
\end{center} 
\caption{CPU times in seconds and optima for the nonnegative quadratic program.  Abbreviations: 
$d$ stands for problem dimension, MM for the proximal distance algorithm, CV for CVX, MA for MATLAB's \texttt{quadprog}, and YA for YALMIP.}
\label{tab:mm_vs_cvx_NQP}
\end{table}

One can estimate an approximate Lipschitz constant for this problem.
Note that $f({\bf 0}) = 0$ and that
\begin{align*}
f(\bx) & \ge  \frac{1}{2}\lambda_{\min} \|\bx\|^2-\|\bb\| \cdot \|\bx\|,
\end{align*}
where $\lambda_{\min}$ is the smallest eigenvalue of $\bA$.
It follows that any point $\bx$ with $\|\bx\|> \frac{2}{\lambda_{\min}} \|\bb\|$
cannot minimize $f(\bx)$ subject to the nonnegativity constraint.
On the other hand, the gradient of $f(\bx)$ satisfies
\begin{align*}
\|\nabla f(\bx)\| & \le  \|\bA\| \|\bx\|+\|\bb\| \le \lambda_{\max} \|\bx\|+\|\bb\|.
\end{align*}
In view of the mean-value inequality, these bounds suggest that 
\begin{align*}
L &  =  \left(\frac{2 \lambda_{\max}}{\lambda_{\min}}+1\right)\|\bb\| = \left[2\cond_2(\bA)+1 \right]\|\bb\|
\end{align*}
provides an approximate Lipschitz constant for $f(\bx)$ on the
region harboring the minimum point. This bound on $\rho$ is usually too 
large. One remedy is to multiply the bound by a deflation factor such as 0.1. 
Another remedy is to replace the covariance $\bA$ by the corresponding correlation matrix. 
Thus, one solves the problem for the preconditioned matrix $\bD^{-1}\bA \bD^{-1}$, 
where $\bD$ is the diagonal matrix whose entries are the square roots of the corresponding 
diagonal entries of $\bA$. The transformed parameters $\by = \bD \bx$ obey the same nonnegativity constraints
as $\bx$.

For testing purposes we filled a $d \times d$
matrix $\bM$  with independent standard normal deviates
and set $\bA = \bM^* \bM + \bI$. Addition of the identity
matrix avoids ill conditioning. We also filled
the vector $\bb$ with independent standard normal deviates. Our gentle tuning constant 
schedule $\epsilon_n = \max\{ 1.005^{-n}, 10^{-15} \}$ 
and $\rho_n = \min\{ 1.005^{n},0.1 \times L \}$ adjusts $\rho$ and $\epsilon$
so slowly that their limits are not actually met in practice. In any event $L$ is the a priori bound 
for the correlation matrix derived from $\bA$. 
Table \ref{tab:mm_vs_cvx_NQP} compares the performance of the MM 
proximal distance algorithm to MATLAB's \texttt{quadprog},
CVX with the SDPT3 solver, and YALMIP with the MOSEK solver. MATLAB's \texttt{quadprog}
is clearly the fastest of the four tested methods on these problems.
The relative speed of the MM algorithm improves as the problem dimension $d$ increases. 
\qed

\newpage
\begin{example}
Linear Regression under an $\ell_0$ Constraint
\end{example}
In this example the objective function is the sum of squares $\frac{1}{2}\|\by-\bX\bbeta\|^2$,
where $\by$ is the response vector, $\bX$ is the design matrix, and $\bbeta$ is the vector of
regression coefficients. The constraint set $S_k^d$ consists of those $\bbeta$ with at
most $k$ nonzero entries. Projection onto the closed but nonconvex set $S_k^d$ is
achieved by zeroing out all but the $k$ largest coordinates in absolute value. These
coordinates will be unique except in the rare circumstances of ties. The proximal
distance algorithm for this problem coincides with that of the previous problem
if we substitute $\bX^* \bX$ for $\bA$, $- \bX^*\by$ for $\bb$, $\bbeta$ for $\bx$,
and the projection operator $P_{S_k^d}$ for $P_{\mathsf{R}_+^d}$.  Better accuracy
can be maintained if the MM update exploits the singular value decomposition
of $\bX$ in forming the spectral decomposition of $\bX^*\bX$. Although the 
proximal distance algorithm carries no absolute guarantee
of finding the optimal set of $k$ regression coefficients, it is far more efficient
than sifting through all $\binom{d}{k}$ sets of size $k$. The alternative of
lasso-guided model selection must contend with strong shrinkage and a surplus of
false positives.  

\begin{table}[t]
\begin{center}
\begin{tabular}{rrrccrrrrrrr}
\toprule
$m$ & $n$ & $df$ & $tp_1$ & $tp_2$ & $\lambda\;\;\;$ & $L_1\;$ & \multicolumn{1}{r}{$L_1 / L_2$} & $T_1\;$ & $T_1 / T_2$ \\
\hline
256 & 128 & 10 & 5.97 & 3.32 & 0.143 & 248.763 & 0.868 & 0.603 & 8.098 \\ 
128 & 256 & 10 & 3.83 & 1.91 & 0.214 & 106.234 & 0.744 & 0.999 & 10.254 \\ 
512 & 256 & 10 & 6.51 & 2.88 & 0.119 & 506.570 & 0.900 & 0.907 & 6.262 \\ 
256 & 512 & 10 & 4.50 & 1.82 & 0.172 & 241.678 & 0.835 & 1.743 & 8.687 \\ 
1024 & 512 & 10 & 7.80 & 5.25 & 0.101 & 1029.333 & 0.921 & 2.597 & 5.057 \\ 
512 & 1024 & 10 & 5.54 & 2.58 & 0.138 & 507.451 & 0.881 & 8.235 & 13.532 \\ 
2048 & 1024 & 10 & 8.98 & 8.49 & 0.080 & 2047.098 & 0.945 & 15.460 & 8.858 \\ 
1024 & 2048 & 10 & 6.80 & 2.93 & 0.110 & 1044.640 & 0.916 & 34.997 & 18.433 \\ 
4096 & 2048 & 10 & 9.75 & 9.90 & 0.060 & 4086.886 & 0.966 & 89.684 & 10.956 \\ 
2048 & 4096 & 10 & 8.36 & 6.60 & 0.086 & 2045.645 & 0.942 & 166.386 & 25.821 \\ 

\bottomrule
\end{tabular}
\caption{Numerical experiments comparing MM to MATLAB's \texttt{lasso}. 
Each row presents averages over 100 independent simulations.
Abbreviations: $m$ the number of cases, $n$ the number of predictors,
$df$ the number of actual predictors in the generating model,
$tp_1$ the number of true predictors selected by MM, $tp_2$ the number
of true predictors selected by the lasso, $\lambda$ the regularization 
parameter at the lasso optimal loss, $L_1$ the optimal loss from MM,
$L_1 / L_2$ the ratio of $L_1$ to the optimal lasso loss,
$T_1$ the total computation time in seconds for MM, and 
$T_1 / T_2$ the ratio of $T_1$ to the total computation time of the lasso.}
\label{tab:mm_vs_cvx_L0_table}
\end{center}
\end{table}

Table \ref{tab:mm_vs_cvx_L0_table} compares the MM proximal distance 
algorithm to MATLAB's \texttt{lasso} function. In simulating data,
we filled $\bX$ with standard normal deviates, set all components of 
$\bbeta$ to 0 except for $\beta_i = 1/i$ for $1 \le i \le 10$,
and added a vector of standard normal deviates to $\bX \bbeta$ to determine $\by$.
For a given choice of $m$ and $n$ we ran each experiment 100 times and averaged the results.
The table demonstrates the superior speed of the lasso and
the superior accuracy of the MM algorithm as measured by optimal
loss and model selection. \qed 

\newpage
\begin{example}
Matrix Completion
\end{example}
Let $\bY =(y_{ij})$ denote a partially observed $p \times q$ matrix and $\Delta$ the 
set of index pairs $(i,j)$ with $y_{ij}$ observed.  Matrix completion 
\cite{candes09} imputes the missing entries by approximating $\bY$ 
with a low rank matrix $\bX$. Imputation relies on the singular value 
decomposition
\begin{align}
\bX & =  \sum_{i=1}^r \sigma_i \bu_i \bv_i^t , \label{svd_formula}
\end{align}
where $r$ is the rank of $\bX$, the nonnegative singular values $\sigma_i$ 
are presented in decreasing order, the left singular vectors $\bu_i$ are orthonormal, 
and the right singular vectors $\bv_i$ are also orthonormal \cite{golub96}. The set 
$R_k$ of $p \times q$ matrices of rank $k$ or less is closed. Projection onto $R_k$ 
is accomplished by truncating the sum (\ref{svd_formula}) to
\begin{align*}
P_{R_k}(\bX) & =  \sum_{i=1}^{\min\{r,k\}} \sigma_i \bu_i \bv_i^t. 
\end{align*}
When $r>k$ and $\sigma_{k+1}=\sigma_k$, the projection operator is multi-valued.

The MM principle allows one to restore the symmetry lost in the missing entries \cite{mazumder10}.  
Suppose $\bX_n$ is the current approximation to $\bX$. One simply replaces a missing entry $y_{ij}$ 
of $\bY$ for $(i,j) \not\in \Delta$ by the corresponding entry $x_{nij}$ of $\bX_n$ and adds the term $\frac{1}{2}(x_{nij}-x_{ij})^2$ to the least squares criterion 
\begin{align*}
f(\bX) & =  \frac{1}{2} \sum_{(i,j) \in \Delta} (y_{ij}-x_{ij})^2 .
\end{align*}
Since the added terms majorize 0, they create a legitimate surrogate function.
One can rephrase the surrogate by defining the orthogonal complement operator 
$P^{\perp}_{\Delta}(\bY)$ via the equation  $P^{\perp}_{\Delta}(\bY)+P_{\Delta}(\bY) = \bY$. The 
matrix $\bZ_n = P_{\Delta}(\bY)+P^{\perp}_{\Delta}(\bX_n)$ temporarily completes 
$\bY$ and yields the surrogate function $\frac{1}{2}\|\bZ_n -\bX\|_F^2$. In 
implementing a slightly modified version of the proximal distance algorithm, 
one must solve for the minimum of the Moreau function
\begin{align*}
\frac{1}{2}\|\bZ_n -\bX\|_F^2+\frac{w_n}{2}\|\bX-P_{R_k}(\bX_n)\|_F^2.
\end{align*}
The stationarity condition
\begin{align*}
{\bf 0} & =  \bX-\bZ_n+ w_n [\bX-P_{R_k}(\bX_n)]
\end{align*}
yields the trivial solution 
\begin{align*}
\bX_{n+1} & =  \frac{1}{1+w_n}\bZ_n+\frac{w_n}{1+w_n}P_{R_k}(\bX_n).
\end{align*}
Again this is guaranteed to decrease the objective function
\begin{align*}
F_\rho(\bX) & =  \frac{1}{2} \sum_{(i,j) \in \Delta} (y_{ij}-x_{ij})^2 
+\frac{\rho}{2}\dist_\epsilon(\bX,R_k)
\end{align*}
for the choice $w_n = \rho/\dist_\epsilon(\bX_n,R_k)$.  

\begin{table}[thb]
\begin{center}
\begin{tabular}{rrrrrrrrrr}
\toprule
$p$   	&	$q$   	& $\alpha$	&  rank	&	$L_1$   &	$L_1 / L_2$ &    $T_1$  & $T_1 / T_2$ \\
\hline
200 	&	250 	&	0.05	&	20	&	1598	&    	0.251	&	4.66	&		7	 \\
800 	&	1000	&	0.20	&	80	&	571949	&	 	0.253	&	131.02	&		18.1 \\
1000	&	1250	&	0.25	&	100	&	1112604	&	 	0.24	&	222.2	&		15.1 \\
1200	&	1500	&	0.15	&	40	&	793126	&	 	0.361	&	161.51	&		3.6	 \\
1200	&	1500	&	0.30	&	120	&	1569105	&	 	0.235	&	367.78	&		12.3 \\
1400	&	1750	&	0.35	&	140	&	1642661	&	 	0.236	&	561.76	&		9	 \\
1800	&	2250	&	0.45	&	180	&	2955533	&		0.171	&	1176.22	&		10.1 \\
2000	&	2500	&	0.10	&	20	&	822673	&	 	0.50	&	307.89	&		1.9	 \\
2000	&	2500	&	0.50	&	200	&	1087404	&	 	0.192	&	2342.32	&		2	 \\
5000	&	5000	&	0.05	&	30	&	7647707	&		0.664	&	1827.16	&		2	 \\
\bottomrule
\end{tabular}
\caption{Comparison of the MM proximal distance algorithm to SoftImpute. Abbreviations: $p$
is the number of rows, $q$ is the number of columns, $\alpha$ is the ratio of observed entries to total entries, $L_1$ is the optimal loss under MM, $L_2$ is the optimal loss under SoftImpute, 
$T_1$ is the total computation time (in seconds) for MM, and $T_2$ is the total computation time for SoftImpute.}
\label{tab:mm_vs_cvx_MC_table}
\end{center}
\end{table}

In the spirit of Example \ref{nnqp_example}, let us derive a
local Lipschitz constant based on the value 
$f({\bf 0}) = \frac{1}{2} \sum_{(i,j) \in \Delta} y_{ij}^2 $. The inequality
\begin{align*}
\frac{1}{2}\sum_{(i,j) \in \Delta} y_{ij}^2  & <  
\frac{1}{2}\sum_{(i,j) \in \Delta} (y_{ij}-x_{ij})^2  
= \frac{1}{2}\sum_{(i,j) \in \Delta} (y_{ij}^2-2y_{ij}x_{ij}+x_{ij}^2 )
\end{align*}
is equivalent to the inequality
\begin{align*}
2 \sum_{(i,j) \in \Delta} y_{ij}x_{ij} & < \sum_{(i,j) \in \Delta} x_{ij}^2.
\end{align*}
In view of the Cauchy-Schwarz inequality 
\begin{align*}
\sum_{(i,j) \in \Delta} y_{ij}x_{ij} & \le  
\sqrt{\sum_{(i,j) \in \Delta} y_{ij}^2}\sqrt{\sum_{(i,j) \in \Delta} x_{ij}^2}\, ,
\end{align*}
no solution $\bx$ of the constrained problem can satisfy
\begin{align*}
\sqrt{\sum_{(i,j) \in \Delta} x_{ij}^2} & > 2 \sqrt{\sum_{(i,j) \in \Delta} y_{ij}^2}\, .
\end{align*}
When the opposite inequality holds,
\begin{align*}
\|\nabla f(\bx)\|_F & = \sqrt{\sum_{(i,j) \in \Delta} (x_{ij}-y_{ij})^2} 
\le  \sqrt{\sum_{(i,j) \in \Delta} x_{ij}^2}+\sqrt{\sum_{(i,j) \in \Delta} y_{ij}^2} 
\le  3 \sqrt{\sum_{(i,j) \in \Delta} y_{ij}^2} \, .
\end{align*}
Again this tends to be a conservative estimate of the required local bound on $\rho$. 

Table \ref{tab:mm_vs_cvx_MC_table} compares the performance of the MM proximal distance algorithm 
and a MATLAB implementation of SoftImpute \cite{mazumder10}. Although the proximal distance algorithm is noticeably slower, it substantially lowers the optimal loss and improves
in relative speed as problem dimensions grow. \qed

\begin{example}
Sparse Inverse Covariance Estimation
\end{example}
The graphical lasso has applications in estimating sparse inverse covariance matrices
\cite{friedman08}. In this context, one minimizes the convex criterion 
\begin{align*}
 -\ln \det \bTheta +\tr(\bS \bTheta)+\rho \|\bTheta\|_1,
\end{align*}
where $\bTheta^{-1}$ is a $p \times p$ theoretical covariance matrix,
$\bS$ is a corresponding sample covariance matrix, and the graphical lasso
penalty $\|\bTheta\|_1$ equals the sum of the absolute values of the off-diagonal
entries of $\bTheta$. The solution exhibits both sparsity and shrinkage.
One can avoid shrinkage by minimizing 
\begin{align*}
f(\bTheta)  & =  -\ln \det \bTheta +\tr(\bS \bTheta)
\end{align*}
subject to $\bTheta$ having at most $2k$ nonzero off-diagonal entries.
Let $T_k^p$ be the closed set of $p \times p$ symmetric matrices possessing 
this property. Projection of a symmetric matrix $\bM$ onto $T_k^p$ can be 
achieved by arranging the above-diagonal entries of $\bM$ in decreasing 
absolute value and replacing all but the first $k$ of these entries by 0.  
The below-diagonal entries are treated similarly.

The proximal distance algorithm for minimizing $f(\bTheta)$ subject to the
set constraints operates through the convex surrogate
\begin{align*}
g(\bTheta \mid \bTheta_n) & = f(\bTheta)+\frac{w_n}{2}\|\bTheta-P_{T_k^p}(\bTheta_n)\|_F^2 \\
w_n & = \frac{\rho}{\sqrt{\|\bTheta_n-P_{T_k^p}(\bTheta_n)\|_F^2+\epsilon}}.
\end{align*} 
A stationary point minimizes the surrogate and satisfies
\begin{align}
{\bf 0} & =  -\bTheta^{-1} +w_n\bTheta + \bS -w_nP_{T_k^p}(\bTheta_n). \label{graphical_lasso_stationarity}
\end{align} 
If the constant matrix $\bS -w_nP_{T_k^p}(\bTheta_n)$ has spectral
decomposition $\bU_n \bD_n \bU_n^*$, then multiplying equation (\ref{graphical_lasso_stationarity})
on the left by $\bU_n^*$ and on the right by $\bU_n$ gives
\begin{align*}
{\bf 0} & =  -\bU_n^*\bTheta^{-1}\bU_n + w_n \bU_n^*\bTheta\bU_n+\bD_n.
\end{align*}
This suggests that we take $\bE = \bU_n^*\bTheta \bU_n$ to be diagonal and
require its diagonal entries $e_i$ to satisfy
\begin{align*}
0 & =  -\frac{1}{e_i}+w_n e_i+d_{ni}.
\end{align*}
Multiplying this identity by $e_i$ and solving for the positive root of the resulting quadratic yields
\begin{align*}
e_i & =  \frac{-d_{ni}+\sqrt{d_{ni}^2+4w_n}}{2w_n}.
\end{align*}
Given the solution matrix $\bE_{n+1}$, we reconstruct 
$\bTheta_{n+1}$ as $\bU_n \bE_{n+1}\bU_n^*$.  

Finding a local Lipschitz constant is more challenging in this example.
Because the identity matrix is feasible, the minimum cannot exceed
\begin{align*}
-\ln \det \bI+\tr(\bS \bI) & =  \tr(\bS)  = \sum_{i=1}^p \omega_i,
\end{align*} 
where $\bS$ is assumed positive definite with eigenvalues $\omega_i$ ordered
from largest to smallest. If the candidate matrix $\bTheta$ is positive
definite with ordered eigenvalues $\lambda_i$, then the von Neumann-Fan inequality 
\cite{borwein00} implies
\begin{align}
f(\bTheta) & \ge  -\sum_{i=1}^p \ln \lambda_i+\sum_{i=1}^p \lambda_i \omega_{p-i+1}.
\label{eigen_bound1}
\end{align}
To show that $f(\bTheta) > f(\bI)$ whenever any $\lambda_i$
falls outside a designated interval, note that
the contribution $-\ln \lambda_j + \lambda_j \omega_{p-j+1}$ to
the right side of inequality (\ref{eigen_bound1}) is bounded below by
$\ln \omega_{p-j+1}+1$ when $\lambda_j = \omega_{p-j+1}^{-1}$. Hence, 
$f(\bTheta) > f(\bI)$ whenever
\begin{align}
-\ln \lambda_i+\lambda_i \omega_{p-i+1} & >  \sum_{i=1}^p \omega_i
- \sum_{j \ne i} (\ln \omega_{p-j+1}+1). 
\label{eigen_bound2}
\end{align}
Given the strict convexity of the function
$-\ln \lambda_i+\lambda_i \omega_{p-i+1}$, equality holds in inequality (\ref{eigen_bound2})
at exactly two points $\lambda_{i \min}>0$ and $\lambda_{i \max}>\lambda_{i \min}$.
These roots can be readily extracted by bisection or Newton's method. The strict inequality
$f(\bTheta) > f(\bI)$ holds when any $\lambda_i$ falls to the left of $\lambda_{i \min}$ or to the
right of $\lambda_{i \max}$. Within the intersection of the intervals 
$[\lambda_{i \max},\lambda_{i \min}]$, the gradient of $f(\bTheta)$ satisfies
\begin{align*}
\|\nabla f(\bTheta)\|_F & \le  \|\bTheta^{-1}\|_F+\|\bS\|_F 
\le  \sqrt{\sum_{i=1}^p \lambda_i^{-2}}+\|\bS\|_F 
\le  \sqrt{\sum_{i=1}^p \lambda_{i \min}^{-2}}+\|\bS\|_F.
\end{align*}
This bound serves as a local Lipschitz constant near the optimal point. 

\begin{table}[thb]
\begin{center}
\begin{tabular}{rrrrcrccr}
\toprule
$p$ & $k_{t}$ & $k_{1}$ & $k_{2}$ & $\rho$ & \multicolumn{1}{c}{$L_1$} & $L_2 - L_1$ 
& $T_1$ & $T_1 / T_2$ \\ 
\hline
$8$ & $18$ & $14.0$ & $14.0$ & $0.00186$ & $-12.35$ & $0.01$ & $0.022$ & $43.458$ \\ 
$16$ & $42$ & $30.5$ & $28.7$ & $0.00305$ & $-25.17$ & $0.08$ & $0.026$ & $43.732$ \\ 
$32$ & $90$ & $53.5$ & $49.9$ & $0.00330$ & $-50.75$ & $0.17$ & $0.054$ & $31.639$ \\ 
$64$ & $186$ & $97.8$ & $89.3$ & $0.00445$ & $-98.72$ & $0.53$ & $0.234$ & $28.542$ \\ 
$128$ & $378$ & $191.6$ & $169.9$ & $0.00507$ & $-196.09$ & $1.14$ & $1.060$ & $18.693$ \\ 
$256$ & $762$ & $345.0$ & $304.2$ & $0.00662$ & $-369.62$ & $2.55$ & $4.253$ & $9.559$ \\ 
$512$ & $1530$ & $636.4$ & $566.8$ & $0.00983$ & $-641.89$ & $6.72$ & $19.324$ & $5.679$ \\ 
\bottomrule
\end{tabular}
\caption{Numerical results for precision matrix estimation. Abbreviations: $p$ for matrix dimension, 
$k_{t}$ for the number of nonzero entries in the true model, $k_1$ for the number of true nonzero entries 
recovered by the MM algorithm, $k_2$ for the number of true nonzero entries recovered by \texttt{glasso}, 
$\rho$ the average tuning constant for \texttt{glasso} for a given $k_t$,
$L_1$ the average loss from the MM algorithm, $L_1 - L_2$ the difference between $L_1$ and the average 
loss from \texttt{glasso}, $T_1$ the average compute time in seconds for the MM algorithm, and $T_1 / T_2$ 
the ratio of $T_1$ to the average compute time for \texttt{glasso}.}
\label{tab:mm_vs_cvx_SPM_table}
\end{center}
\end{table}

Table \ref{tab:mm_vs_cvx_SPM_table} compares the performance of the MM algorithm to
that of the R \texttt{glasso} package \cite{friedman08}.
The sample precision matrix $\bS^{-1}= \bL \bL^* + \delta \bM \bM^*$ was 
generated by filling the diagonal and first three subdiagonals of the banded
lower triangular matrix $\bL$ with standard normal deviates.  Filling
$\bM$ with standard normal deviates and choosing $\delta = 0.01$ 
imposed a small amount of noise obscuring the band nature of $\bL \bL^*$.
All table statistics represent averages over 10 runs started at 
$\bTheta = \bS^{-1}$ with $k$ equal to the true number of nonzero entries
in $\bL \bL^*$. The MM algorithm performs better in minimizing average loss and 
recovering nonzero entries.   \qed

\section{Discussion}

The MM principle offers a unique and potent perspective on high-dimensional optimization.
The current survey emphasizes proximal distance algorithms and their applications in
nonlinear programming. Our construction of this new class of algorithms relies on
the exact penalty method of Clarke \cite{clarke83} and majorization of a smooth 
approximation to the Euclidean distance to the constraint set.  Well-studied
proximal maps and Euclidean projections constitute the key ingredients of
seven realistic examples. These examples illustrate the versatility
of the method in handling nonconvex constraints, its improvement as problem
dimension increases, and the pitfalls in sending the
tuning constants $\rho$ and $\epsilon$ too quickly to their limits. 
Certainly,
the proximal distance algorithm is not a panacea for optimization problems.
For example,
the proximal distance algorithm as formulated exhibits remarkably fickle behavior on linear programming problems.
For linear programming,
we ensure numerical stability and guard against premature convergence only by great care in parameter tuning and updating. 
Nonetheless, we are sufficiently encouraged to pursue this research further, particularly in statistical applications where model
fitting and selection are compromised by aggressive penalization. 

\section{Acknowledgments}
Kenneth Lange was supported by NIH grants from the National Human Genome Research 
Institute (HG006139) and the National Institute of General Medical Sciences (GM053275). 
Kevin L. Keys was supported by a National Science Foundation Graduate Research Fellowship 
under Grant Number DGE-0707424. 
We are grateful to Hua Zhou for carefully analyzing the proximal distance algorithm in linear programming.

\end{document}